\title[Universal Approximation with Manifold-Constrained Neural ODEs]{Learning on Manifolds: Universal Approximations Properties using Geometric Controllability Conditions for Neural ODEs}
\newcommand{\Rd}{\mathbb{R}^d}
\newcommand{\R}{\mathbb{R}}
\newcommand{\M}{\mathcal{M}}
\newcommand{\U}{\mathcal{U}}
\newcommand{\N}{\mathcal{N}}
\newcommand{\F}{\mathcal{F}}
\newcommand{\A}{\mathcal{A}}
\newtheorem{assumption}[theorem]{Assumption}
\newtheorem{problem}[theorem]{Problem}
\author{\Name{Karthik Elamvazhuthi} \Email{kelamvazhuthi@engr.ucr.edu}\\
 \Name{Xuechen Zhang} \Email{xzhan394@ucr.edu}\\
 \Name{Samet Oymak} \Email{oymak@ece.ucr.edu}\\
 \Name{Fabio Pasqualetti} \Email{fabiopas@engr.ucr.edu}\\
\addr 900 University Avenue, Riverside, CA, USA, 92521}
\begin{document}

\maketitle

\begin{abstract}%
In numerous robotics and mechanical engineering applications, among others, data is often constrained on smooth manifolds due to the presence of rotational degrees of freedom. Common data-driven and learning-based methods such as neural ordinary differential equations (ODEs), however, typically fail to satisfy these manifold constraints and perform poorly for these applications. To address this shortcoming, in this paper we study a class of neural ordinary differential equations that, by design, leave a given manifold invariant, and characterize their properties by leveraging the controllability properties of control affine systems. In particular, using a result due to Agrachev and Caponigro on approximating diffeomorphisms with flows of feedback control systems, we show that any map that can be represented as the flow of a manifold-constrained dynamical system can also be approximated using the flow of manifold-constrained neural ODE, whenever a certain controllability condition is satisfied. Additionally, we show that this universal approximation property holds when the neural ODE has limited width in each layer, thus leveraging the depth of network instead for approximation. We verify our theoretical findings using numerical experiments on PyTorch for the manifolds $S^2$ and the 3-dimensional orthogonal group $SO(3)$, which are model manifolds for mechanical systems such as spacecrafts and satellites. We also compare the performance of the manifold invariant neural ODE with classical neural ODEs that ignore the manifold invariant properties and show the superiority of our approach in terms of accuracy and sample complexity. %
\end{abstract}

\begin{keywords}%
  List of keywords%
\end{keywords}

\section{Introduction}

Due to complexity of robotics systems, data based modeling has become an important technique for predictive and control tasks in many robotics applications \cite{brunke2022safe}. One important consideration when using data based methods in such applications is the presence of rotational degrees of freedom in mechanical systems \cite{choset2005principles, bullo2019geometric}. These rotational degrees of freedom require that most robotics systems’ states be constrained to  on a lower dimensional surface, or manifold, embedded in a higher dimensional Euclidean space. Therefore, it is essential that any black box model that is used in these applications respect the manifold constraints arising from the physics of the system, without which the model can provide  nonphysical outputs, affecting their applicability. Beyond the concern of nonphysical nature of outputs generated by classical black box models, another reason to take this information of lower dimensionality of the state space into account is that it may also reduce the number of parameters required to fit the model to data, and thus mitigate the curse of dimensionality. 

A class of neural networks that has now become standard in machine learning applications is residual neural networks (ResNets) \cite{he2016deep}. However, ResNets do not necessarily satisfy manifold constraints as desired in robotics applications. For this reason, there has been some effort recently on generalizing ResNets to manifold-valued data. One way to generalize ResNets on manifolds  is by considering 
them as numerical discretizations of parameterized ordinary differential equations (ODEs) called neural ODEs \cite{weinan2017proposal,haber2017stable,chen2018neural}. For example, \cite{falorsi2020neural,lou2020neural} present variations of neural ODEs on manifolds, from which one can derive a ResNet respecting geometric constraints, by appropriately discretizing the neural ODE. This point of view, of ResNets as numerical discretizations of neural ODEs, was originally developed to open tools from dynamical systems and control theory to design and train more stable versions of ResNets. This perspective has also been fruitful in understanding the approximation properties of deep neural networks. For example, in  \cite{tabuada2020universal} investigated the universal approximation properties of deep ResNets using geometric control techniques. In \cite{agrachev2021control}, the authors construct a new class control systems that, similar to neural ODEs, have the capability to approximate diffeomorphisms. 

While generalization of neural ODEs to manifolds have been presented in \cite{falorsi2020neural,lou2020neural}, there has been no work on the approximation capabilities of such neural ODEs, for learning manifold-valued maps. In this paper, we study the approximation property of a class of neural ODEs of manifolds from the point of view of controllability properties of control-affine systems, a popular class of systems studied in the control theory literature. For the purpose  of machine learning   problems, the controllability problem of interest is whether the weights of the control system can be used to control the flow of the ODE in such way that the flow, at  a given time instant, is close to the map that is required to be approximated. This non-traditional nature of control problem makes the question of universal approximation challenging to address, since one is not just required to transfer a given initial condition of the control system to another, but simultaneously transfer a collection of initial conditions to a target collection of final values using the same control inputs. We show that a controllability condition on the vector-fields, well known in the form of {\it bracket generating condition} in geometric control theory literature, suffices to guarantee universal approximation of a sufficiently rich class of diffeomorphisms, including those that can be represented using the flow of a manifold-constrained dynamical system. We also use numerical simulations to verify the approximation properties of the neural ODEs on manifolds and compare their performance with classical neural ODEs that do not take into account the manifold valued nature of the data.

\section{Notation}
In this section, we introduce some notation that will be used throughout the paper. The open ball around $x \in \Rd$ will be denoted by
$B_R(x) := \{r \in \R^d ; |x-y|  < R\}$.  Let $\M \subseteq \R^d$ denote a compact smooth manifold without boundary \cite{lee2013smooth}. 
 We will denote by ${\rm Diff}_0(\M)$ the set of diffeomorphisms on $\M$ that are \textit{isotopic} to the identity map $I$ on $\M$. By \textit{isotopic}, we mean that, $X \in {\rm Diff}_0(\M)$ if there exist a smooth map $Y: [0,1] \times \M \rightarrow \M$ such that $Y(0,\cdot)= I$ and $Y(1,\cdot) = X$. A function $f : \R^d \rightarrow \Rd$ is \textit{locally Lipschitz}, if for each $R>0$, there exists $L_R>0$ such that  $|f(x) - f(y)| \leq L_R |x -y| $ for all $x, y \in B_R(x)$. 
 Given two vector-fields $f: \Rd \rightarrow \Rd$ and $g: \Rd \rightarrow \Rd$, the {\it Lie-Bracket} between the two vector-fields is denoted by $[f,g]$ and given by
 \begin{equation} 
 [f,g]^i(x) = \sum_{j=1}^d f^j(x)\frac{\partial g^i}{\partial x^i}(x) -g^j(x)\frac{\partial f^i}{\partial x^i}(x) 
 \end{equation}
 for all $x \in \Rd$, where $f^i(x)$ refers to the $i^{th}$ element of the vector $f(x)$. Let $\mathcal{V} = \{ f_1, ..., f_m\}$ be a finite number of vector fields. Setting $\mathcal{V}_0 = \mathcal{V}$, for each $i \in \mathbb{Z}_+$, we define in an iterative manner the set of vector-fields $\mathcal{V}^i = \lbrace [g, h];g ,h \in \mathcal{V}^{j},~j =0,..,i-1\rbrace $.   We will denote by ${\rm Lie}_x \mathcal{V}$ the set  of vectors defined by,
 \begin{equation}
 {\rm Lie}_x \mathcal{V} = {\rm span} \{g(x);g \in \mathcal{V}^i, ~ i =0,1,...\}.
 \end{equation}
 We will say that the collection of vector fields $\mathcal{V}$ is \textit{bracket generating} on $\M$ if ${\rm Lie}_x \mathcal{V} = T_x \M$ for all $x \in \mathcal{M}$, where $T_x\M$ is the tangent space of the manifold at $x \in \M$.
\section{Problem Formulation}
In this section, we formulate the main problem being addressed in the paper.
Let $\sigma :\mathbb{R} \rightarrow \mathbb{R}$ be a given {\it activation function}. 
We define $\Sigma : \mathbb{R}^d \rightarrow \mathbb{R}^d$ by
\begin{equation}
\Sigma(x) = [\sigma(x_1),...,\sigma(x_d)]^T
\label{eq:vecact}
\end{equation}

An example of the class of activation functions that we consider is {\it sigmoidal functions} with globally bounded derivatives. An activation function $\sigma$ is said to {\it sigmoidal} if its range lies in $[0,1]$, 
\begin{equation}
\label{eq:clnode}
\lim_{x \rightarrow -\infty} \sigma(x)=0 \ \mathrm{and} \ \lim_{x \rightarrow \infty} \sigma(x)=1.
\end{equation}
One such sigmoidal function is 
\begin{equation}
\label{eq:log}
\sigma(x) = \frac{1}{1+e^{-x}}.
\end{equation}

Let $\M \subseteq \mathbb{R}^d$ be a smooth compact manifold that is known. The learning problem that we consider is the following. Suppose there is an unknown map $\Psi:\M \rightarrow \M$. We wish to learn the map $\Psi$ using samples of input output data $\{(x_1,y_1),...,(x_n,y_n)\}$ where $x_i$ are distributed according to some density function $\rho(x)$. One way construct an approximation is using time-one map $X :\R^d \rightarrow \R^d $of the neural ODE,
\begin{eqnarray}
 \label{eq:node}
\dot{x}(t) = A(t)\Sigma(W(t)x+b(t)) ~ ~~x(0)=x_0
\label{eq:node}
\end{eqnarray}
where $A: [0,T] \rightarrow \mathbb{R}^{d \times d}$, $W: [0,T] \rightarrow  \mathbb{R}^{d \times d}$ and  $b : [0,T] \rightarrow \mathbb{R}^d$ are the weights for the neural network. The time-one map $X$ is defined by setting
 $X(x_0) = x(1)$ for each $x_0 \in \mathbb{R}^d$. The feasibility of approximating $\Psi$ using $X$ has been shown in \cite{tabuada2020universal}, for a large class of activation functions $\sigma$.

A drawback of the previous formulation is that the learning problem does not use prior knowledge of the manifold $\M$. Hence, $x_0 \in \M$ does not necessarily imply that $X(x_0) \in \M$. This is critical in applications such as modeling of mechanical systems where the physical state of the system is known to be confined to a given manifold $\M$. To use this knowledge we propose a generalization of neural ODE inspired by control affine systems in control theory.

To ensure that the output data generated by neural ODE stays on $\M$ we choose some $m$ vector-fields $g_i:\mathbb{R}^d \rightarrow \mathbb{R}^d$ that are smooth and satisfy $g_i(x) \in T_x\M$ for all $x$,  where $T_x\M$ denotes the tangent space of $\M$ at $x$.

 Then we consider the following manifold invariant neural ODE

\begin{eqnarray}
\dot{x}(t) = \sum_{i=1}^m a_i(t)g_i \bigg(x(t ) \bigg)\sigma \bigg(w_i^T(t)x(t)+b_i(t) \bigg ) \\
x(0) = x_0
\label{eq:canode}
\end{eqnarray}
where $ a_i: [0,1] \rightarrow  \R$, $w_i :[0,1] \rightarrow \Rd $ and $b:[0,1] \rightarrow \R$ are the weight parameters of the generalized neural ODE. Let $X: \M \rightarrow \M$ be the flow map generated by  the neural ODE \eqref{eq:canode}, defined by setting $X(x_0) = x(1)$ and all $x_0 \in \M$. 

Given this generalized neural ODE, the approximation problem that we consider is the following.
\begin{problem}
Given $\Psi : \M \rightarrow \M$ and $\epsilon>0$ do there exist weight parameters $(a_i,w_i,b_i)$ such that the time-one map $X$ of the neural ODE on manifold \eqref{eq:canode} satisfies.
\begin{equation}
|X(x) -\Psi (x)| \leq \epsilon
\end{equation}
for all $x \in \M$.
\end{problem}

In order to address the above problem, we will need some additional assumptions. The first assumption that we will make is on the vector fields $ \{g_1,...,g_m\}$

\begin{assumption}
	We make the following assumptions on the vector field $g_i$ 
 \begin{enumerate}
 \item The vector fields $g_i: \Rd \rightarrow \Rd$ are smooth.
  \item The vector fields $g_i$ satisfy $g_i(x) \in T_x\M$ for all $x \in \M$.
 \end{enumerate}
	\label{asmp:vec}
\end{assumption}

Let $\U $ be the subset of functions defined by 
\begin{align*}
\U  = \big \{ u:[0,T] \times  \Rd  \rightarrow \R : u \text{ is piecewise constant in time and } \\ u(t, \cdot) ~ \text{is locally Lipschitz for each} ~t \in [0,1] \big \} 
\end{align*}

In addition to this, we will need some mild assumptions on the activation function $\sigma:\mathbb{R}\rightarrow \mathbb{R}$.
For this purpose, let us define the set of functions  
\begin{align*}
\mathcal{F}=\bigcup_{m\in \mathbb{Z}_+}\{ \sum_{i=1}^m \alpha_i \sigma(w_i^Tx+b_i) \ | \ \alpha_i \in \mathbb{R}, w_i \in \mathbb{R}^d, b_i \in \mathbb{R}\}.
\end{align*}
Note that the set $\mathcal{F}$ is the set of arbitrarily wide single-hidden layer neural networks. Using this set we define the subset of $\U$ that take values over $\F$,
\[ \N  = \big \{ u \in \U: u(t, \cdot)  \in  \F ~\text{for each }~ t \in [0,1] \big \} \]
Similarly, we define the set of functions 
\begin{align*} 
\A  = \big \{ & u \in \U: \text{for some piecewise constant functions} ~ \alpha:[0,1] \rightarrow \R,
 \\
& w:[0,1] \rightarrow \Rd, b:[0,1] \rightarrow \R, \\
&~u(t, \cdot) = \alpha(t) \sigma(w^T(t)x+b(t))    \in  \F ~\text{for each }~ t \in [0,1] \big \} 
\end{align*}

We will make the following assumptions on the activation function $\sigma$.
\begin{assumption}
	\label{asmp:neura}
	We make the following assumptions:
	\begin{enumerate}
		\item \textbf{(Regularity)} The activation function $\sigma$ is globally Lipschitz. That is, there exists $K>0$ such that 
		\begin{equation}
		|\sigma (x) - \sigma (y)| \leq K|x-y|, 
		\label{asmp:neura1}
		\end{equation}
		for all $x,y\in \Rd$.
		\item \textbf{(Density of superpositions)} Given a function $f \in C (\Rd;\R)$, and $R>0$, for every $\delta >0$, there exists a function  $ h \in \mathcal{F}$ such that		
    \[|f(x) -h(x)| +\sum_{i=1}^d |\frac{\partial f}{\partial x_i}(x) -\frac{\partial h}{\partial x_i} (x)|<\delta.\]    
     for (Lebesgue) almost every $x \in B_R(x)$.
  \label{asmp:neura2}
  \end{enumerate}
\end{assumption} 

The above assumption is satisfied by both, the Sigmoid activation function, as well as the Rectified Linear unity (  ReLu) activation function.

\section{Analysis}
Before we begin our analysis we make the standard observation that due to Assumption \ref{asmp:vec}, given $u_i \in \U$ for each $i = 1,...,m$, for the following ODE

\begin{eqnarray}
	\dot{x} =  \sum_{i=1}^m u_i(t,x(t)) g_i \bigg(x(t ) \bigg) \nonumber \\
	x(0) = x_0,
 \label{eq:maincsys}
	\end{eqnarray}

the solution $x(t)$ is in fact global and lies in $\M$. This result follows from  \cite{lee2013smooth}.
\begin{theorem}
Given Assumption \ref{asmp:vec}, and functions $u_i \in \U$, suppose that $x(0) \in \M$, then a unique solution $x:[0,1] \rightarrow \Rd$ to the differential equation \eqref{eq:maincsys}
exists. Moreover, $x(t) \in \M$ for all $ t\in [0,1]$.
\end{theorem}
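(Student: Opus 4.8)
The plan is to reduce the statement to classical Cauchy--Lipschitz theory on each time interval on which the controls are constant, and then to upgrade the resulting local solution to a global one on $[0,1]$ by using the tangency hypothesis to confine the trajectory to the compact set $\M$, which rules out finite-time blow-up.

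First, fix $u_i \in \U$ and let $0 = t_0 < t_1 < \cdots < t_N = 1$ be a partition on which every $u_i(\cdot,x)$ is constant in $t$. On each subinterval $[t_j,t_{j+1})$ the right-hand side $F(x) := \sum_{i=1}^m u_i(t_j,x)\,g_i(x)$ is time-independent and locally Lipschitz in $x$, being a finite sum of products of the locally Lipschitz functions $u_i(t_j,\cdot)$ with the smooth (hence locally Lipschitz) vector fields $g_i$. The Picard--Lindel\"of theorem therefore gives a unique maximal solution on some interval $[t_j,\tau_j)$ with $\tau_j \le t_{j+1}$, and the standard continuation (escape) criterion guarantees that if the solution stays in a fixed compact set then $\tau_j = t_{j+1}$ and it extends to the closed interval. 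Concatenating across $j = 0,\dots,N-1$ — each subinterval started from the terminal value of the previous one — then yields existence and uniqueness on all of $[0,1]$, \emph{provided} the trajectory never leaves $\M$.

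The core of the argument is thus invariance of $\M$. Assume inductively $x(t_j)\in\M$ and consider the solution on $[t_j,\tau_j)$; I claim $S := \{\,t\in[t_j,\tau_j) : x(s)\in\M \text{ for all } s\in[t_j,t]\,\}$ equals $[t_j,\tau_j)$. The set $S$ is nonempty (it contains $t_j$) and relatively closed (by continuity of $x$ and closedness of $\M$ in $\Rd$), so it suffices to show it is open. If $t^\star\in S$, then $p := x(t^\star)\in\M$, and we pick a slice chart $\phi: V \to \phi(V)$ around $p$ with $\phi(V\cap\M) = \phi(V)\cap(\R^k\times\{0\})$, $k := \dim\M$. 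In these coordinates the trajectory solves $\dot y = \tilde F(y)$ with $\tilde F(y) = D\phi(\phi^{-1}(y))\,F(\phi^{-1}(y))$, which is again locally Lipschitz since diffeomorphisms preserve local Lipschitz bounds. Because $g_i(q)\in T_q\M$ for every $q\in\M$ (Assumption~\ref{asmp:vec}) and $T_q\M$ is a linear subspace, $F(q)\in T_q\M$, hence $D\phi(q)F(q)\in\R^k\times\{0\}$ for every $q\in V\cap\M$; that is, the last $d-k$ components of $\tilde F$ vanish identically on the whole slice. Writing $z(t)$ for those last $d-k$ coordinates of $y(t)$ (so $z(t^\star)=0$) and $y'(t)$ for the first $k$, local Lipschitzness gives $|\dot z(t)| = |\tilde F_\perp(y'(t),z(t)) - \tilde F_\perp(y'(t),0)| \le L\,|z(t)|$ for $t$ near $t^\star$, so $\tfrac{d}{dt}|z|^2 \le 2L|z|^2$ and Gr\"onwall's inequality forces $z\equiv 0$ on a neighborhood of $t^\star$. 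Thus $x(t)\in\M$ there, $S$ is open, and by connectedness $S = [t_j,\tau_j)$; since $\M$ is compact the escape criterion gives $\tau_j = t_{j+1}$ and the induction continues, producing the global solution in $\M$.

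The one genuinely delicate point is the invariance step, and within it the observation that the normal components of the pushed-forward field vanish on the \emph{entire} slice rather than merely along the trajectory — this is exactly where the hypothesis $g_i(x)\in T_x\M$ for all $x\in\M$ enters — together with the preservation of local Lipschitz constants under the chart diffeomorphism, which is what makes the Gr\"onwall estimate legitimate. The remainder is bookkeeping: patching finitely many constant-control subintervals and invoking compactness of $\M$ to preclude blow-up. Alternatively one may skip the explicit chart computation and cite the standard result on integral curves of vector fields tangent to an embedded submanifold in \cite{lee2013smooth}, which packages precisely this local-to-global and tangency argument.
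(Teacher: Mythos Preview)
Your proof is correct. The paper itself provides no argument beyond the sentence ``This result follows from \cite{lee2013smooth}'' immediately preceding the theorem, so there is nothing to compare against except the citation you yourself mention at the end as an alternative. Your explicit slice-chart plus Gr\"onwall argument is a clean unpacking of exactly the invariance result that Lee packages, and the piecewise-constant decomposition together with the compactness escape criterion handles the remaining existence/globality issues that are specific to the class $\U$ and are not literally stated in Lee; in that sense your write-up is more complete than what the paper offers.
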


The general idea behind the main result, stated in Theorem \ref{thm:mainthm}, is the following. There exist a large class of maps that can be approximated using the flow of the ODE \eqref{eq:maincsys} using an appropriate choice of functions $u_i$ for $i = 1,...,m$, as long as the vector-fields $\mathcal{V} = \{ g_1, ..., g_m\}$ satisfy a controllability condition. This result is due to \cite{agrachev2009controllability}. However, given functions $u_i$, it is not in general possible to approximate the vector field of \eqref{eq:maincsys} using the the vector field of the neural ODE \eqref{eq:canode} in any classical sense (for example, in the uniform norm or $L_2$ norm), due to the fact that $\A$ is a much smaller set that $\U$. Despite this, using weak convergence arguments, we can use a sequence of vector-fields of the form of the  the neural ODE \eqref{eq:canode} to {\it weakly converge} to functions $u_i$. This, in turn, results in uniform convergence of the flow of neural ODE \eqref{eq:canode} to that of the differential equation  \eqref{eq:maincsys}, thus allowing us to extend the approximation result of \cite{agrachev2009controllability} to flows of the neural ODE \eqref{eq:canode}.

\begin{theorem}
\label{thm:mainthm}
Given that Assumption \ref{asmp:neura}  holds, suppose that $Y \in {\rm Diff}_0(\M)$ and $\epsilon > 0$.  Additionally, suppose that $\mathcal{V} $ satisfies the Bracket generating property on $\M$. That is, $Lie_x \mathcal{V} = T_x \M$ for all $x \in \M$. Let $X: \M \rightarrow \M$ be the flow map generated by  the neural ODE \eqref{eq:canode}, defined by setting $X(x_0) = x(1)$ and all $x_0 \in \M$. Then there exist piecewise constant parameters $a_i  \in \R$, $w_i:[0,1] \rightarrow  \Rd$ and $b_i:[0,1] \rightarrow \Rd$ such that 
\[\sup_{x \in \Omega} |X(x) - Y(x)| \leq \frac{\epsilon}{2}.\]
for all $x \in \M$.
\end{theorem}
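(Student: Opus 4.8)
The plan is to split the argument into two conceptually separate pieces: a controllability step that produces a target from the full (nonlinear-feedback) control system \eqref{eq:maincsys}, and an approximation step that realizes that target using the restricted neural-ODE dynamics \eqref{eq:canode}. First I would invoke the result of \cite{agrachev2009controllability}: since $\mathcal{V} = \{g_1,\dots,g_m\}$ is bracket generating on $\M$ and $Y \in {\rm Diff}_0(\M)$ is isotopic to the identity, there exist control functions $u_i \in \U$ — piecewise constant in $t$, locally Lipschitz in $x$ — such that the time-one flow $\Phi_1^u$ of \eqref{eq:maincsys} satisfies $\sup_{x\in\M}|\Phi_1^u(x) - Y(x)| \le \epsilon/4$. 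This handles the ``which maps are reachable'' question entirely and reduces the theorem to: approximate the flow of \eqref{eq:maincsys} for these fixed $u_i$ by the flow of \eqref{eq:canode}.

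The second step is the heart of the matter. On each time subinterval where every $u_i(t,\cdot)$ is a fixed locally Lipschitz function $v_i(\cdot)$, I would use Assumption \ref{asmp:neura2} (density of superpositions, jointly for the function and its first derivatives) to pick $h_i^{(k)} \in \F$ with $h_i^{(k)} \to v_i$ and $\partial_x h_i^{(k)} \to \partial_x v_i$, uniformly on the compact set $\M$ (more precisely on a ball containing it). Each $h_i^{(k)}(x) = \sum_j \alpha_{ij}\sigma(w_{ij}^T x + b_{ij})$ is a sum of terms of exactly the form appearing in \eqref{eq:canode}; by the standard device of concatenating many short subintervals — on the $\ell$-th micro-interval of length $1/N$ within the original subinterval one activates only the single term $N\alpha_{i\ell}\,g_i(x)\sigma(w_{i\ell}^T x+b_{i\ell})$ — one obtains a genuine neural ODE of the form \eqref{eq:canode} (or rather \eqref{eq:canode} with piecewise-constant parameters, where each ``layer'' uses a single neuron, matching the limited-width claim) whose averaged vector field converges \emph{weakly} (in the sense of the control measures / in $L^\infty$-weak-$*$ against test functions) to $\sum_i v_i(x) g_i(x)$ on that subinterval. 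As emphasized in the paragraph preceding the theorem, one cannot expect uniform or $L^2$ convergence of the vector fields because $\A$ is far smaller than $\U$, so the argument must be a weak-convergence/averaging one.

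Concatenating across all finitely many subintervals, I would then invoke a continuity-of-flows-under-weak-convergence lemma: if a sequence of (uniformly bounded, uniformly locally Lipschitz-in-$x$ on $\M$) time-dependent vector fields $F_k(t,x)$ converges weakly in $t$ to $F(t,x)$ — with the Lipschitz-in-$x$ bounds and the derivative-approximation of Assumption \ref{asmp:neura2} giving the needed equicontinuity and passage-to-the-limit in the nonlinear term — then the corresponding time-one flows converge uniformly on $\M$. This is the classical fact underlying chattering/relaxed controls and is exactly where Assumption \ref{asmp:neura2}'s control of $\partial_x h$ is used, ensuring the flow map is not merely continuous but Lipschitz with a uniform constant, so that $\sup_{x\in\M}|X(x) - \Phi_1^u(x)| \le \epsilon/4$ for $k$ large. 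Combining with the controllability step and the triangle inequality gives $\sup_{x\in\M}|X(x) - Y(x)| \le \epsilon/2$, and all parameters are piecewise constant by construction. The invariance $X(\M)\subseteq\M$ is automatic from Theorem~1, since every intermediate vector field is a state-dependent combination of the $g_i$, which are tangent to $\M$.

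The main obstacle I anticipate is making the weak-convergence-to-uniform-flow-convergence step rigorous for a \emph{nonlinear} state-dependent vector field: weak convergence of $u_i(\cdot)$ in time does not by itself pass through the composition $u_i(t,x(t))g_i(x(t))$, so one needs the Lipschitz-in-$x$ uniformity (from Assumption~\ref{asmp:neura2}) together with a Grönwall-type estimate to control how the trajectories of the approximating system track those of the limiting system, and one must be careful that the micro-interval construction keeps the $L^1$-in-time norm of the vector fields bounded uniformly (the factor $N$ multiplying $1/N$) rather than blowing up. A secondary technical point is verifying that the concatenated single-neuron construction indeed lies in the admissible class in \eqref{eq:canode} with the claimed limited width; this is bookkeeping but needs to be stated carefully.
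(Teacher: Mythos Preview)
Your proposal is correct and follows essentially the same route as the paper: first invoke the Agrachev--Caponigro controllability result to produce feedback controls $u_i\in\U$ whose flow approximates $Y$, then approximate those $u_i$ by elements of $\A$ via the two-stage process (density in $\F$, then the chattering/micro-interval construction) and pass from weak convergence of vector fields to uniform convergence of flows. The paper packages your second step as a separate Proposition/Lemma in the appendix (citing \cite[Lemma~IV.7]{elamvazhuthi2022neural} for the micro-interval construction and \cite[Lemma~2.7]{pogodaev2016optimal} plus Gr\"onwall and Arzel\`a--Ascoli for the flow-convergence), but the decomposition and the ingredients are exactly those you outline, including the uniform Lipschitz control you flagged as the main obstacle.
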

\begin{proof}
Since from $Y \in {\rm Diff}_0(\M)$ and due the bracket generating assumption that $Lie_x \mathcal{V} = T_x \M$, it follows from \cite[Corollary 3.1]{agrachev2009controllability}, that there exists $u \in \U $ such that the corresponding time one map $X$  satisfies,

\[\sup_{x \in \Omega} |X(x) - Y(x)| \leq \frac{\epsilon}{2}.\] The result then follows by Proposition \ref{res:diffapp} by approximating $u$ weakly using an element in $\A$.
\end{proof}

Formally, the above approximation result can be understood as follows. The class of maps isotopic to the identity include flows generated by dynamical systems. Suppose the map $Y$ is the time one map of the ODE,
\begin{equation}
\dot{x}(t) =f(t,x(t))
\end{equation}
for some time-dependent smooth vector field $f :[0,1] 
\times \R^d \rightarrow \Rd$. Additionally, suppose that that $x(0) \in \M$ implies $ x(t) \in \M$ for all $t \in \M$, then $Y$ is isotopic to the identity and hence, can be approximated by the flow of the neural ODE \eqref{eq:canode}. The bracket generating assumption can be interpreted as requiring that the following version of the controllability problem,
\begin{eqnarray}
	\dot{x} =  \sum_{i=1}^m u_i(t) g_i (x(t ) ) \nonumber \\
	x(0) = x_0,~~x(1) =x_1 \nonumber
	\end{eqnarray}
is well-posed for all $x_0, x_1 \in \M$. That is, if for every $x_0,x_1 \in \M$, there exist time-dependent functions $u_i(t)$ such that $x(0) = x_0,~~x(1) =x_1$, then the flow of the  manifold invariant neural ODE \eqref{eq:canode} can be used to approximate any element of  ${\rm Diff}_0(\M)$.

~\\
\section{Numerical Results}
In this section, we numerically test the approximation properties of the neural ODE on manifold, using two examples. For each example, we will compare the  performance of our manifold constrained neural ode with the performance of the classical neural ODE \eqref{eq:clnode}. For this purpose, we discretize the problem in time to solve the problem in Pytorch. For this purpose, we introduce the time discretized version of the problem for the classical neural ODE and then for each case, the manifold invariant neural ODE.

Suppose we are given input-output data $\{(x_0^1,y^1),...,(x_0^p,y^p))\} \subset \mathbb{R}^{d} \times \Rd$ we wish to solve the problem, 
\begin{equation}
\min_{\Theta} \frac{1}{P}\sum_{j=1}^P\|x_M^j-y^j\|_2^2+\frac{\lambda \Delta t}{2} \sum_{N=1}^M\|\Theta_N\|_2^2 
\end{equation}
with $\Theta_N = \{A_N, W_N, b_N\} \in \mathbb{R}^{d \times d} \times \mathbb{R}^{d \times d} \times \mathbb{R}^d$  subject to constraints arising from the forward Euler discretization of \eqref{eq:node}, which is the M-layer deep Resnet,
\begin{equation}
\label{eq:disresnet}
x^j_{N+1}= x^j_{N} + \Delta t A_N\Sigma(W_Nx^j_N+b_N)~~   \text{for}~ N= 0,..., M, ~~ j=1,...,P 
.\end{equation}

Here $\Sigma$ is as defined in \eqref{eq:vecact}, $A_N \in \mathbb{R}^{d \times d}$, $W_N \in \mathbb{R}^{d \times d}$  and $b_N \in \mathbb{R}^{d}$, and $\Delta t = \frac{1}{M}$.

\textbf{Example 1: Supervised Learning on the Sphere} In this example we take the manifold $\mathcal{M} =\mathbb{S}^2 = \{x \in \R^3 ; x^Tx = 1\} \subseteq \mathbb{R}^3$, the two dimensional sphere. We define the matrices,
\begin{eqnarray}
\label{eq:genma}
B_1 = 
\begin{bmatrix}
0 & -1 & 0 \\
1 &  0  & 0 \\
0 & 0  & 0 
\end{bmatrix},
~
B_2 =
\begin{bmatrix}
0 & 0& 1 \\
0 &  0  & 0 \\
-1 & 0  & 0 
\end{bmatrix},
\nonumber \\ 
\end{eqnarray}
Let $g_i(x)  = B_ix$ for all $x \in \mathbb{R}^d$ for $i=1,2$. It can be verified that 
\begin{equation}
B_1 B_2 -B_2 B_1= B_3 =
\begin{bmatrix}
0 & 0& 0 \\
0 &  0  & -1 \\
0 & 1  & 0 
\end{bmatrix}
\end{equation}
and hence that $[g_1,g_2](x) = g_3(x) =B_3x$ for all $x \in \mathbb{S}^2$.  
We consider the following neural ODE on the sphere,
\begin{eqnarray}
\dot{x}(t) =  \sum_{i=1}^2 a_i(t)B_i x(t) \sigma \bigg(w_i^T(t)x(t)+b_i(t) \bigg ) \\
x(0) = x_0
\label{eq:sphnode}
\end{eqnarray}
By construction $g_1(x)=B_1x,g_2(x)= B_2x \in T_x\mathbb{S}^2$ for all $x \in M$.
Define 
\[f_i(t,x,\Theta^i(t)) = a_i(t)\sigma(w_i^T(t)x + b_i(t))\]
for $x \in \mathbb{R}^3$ and $\Theta^i(t) = \{a_i(t),w_i(t),b_i(t)\} \in \R \times \Rd \times \R$, for each $t$.  The discretized problem that we solve is the following,
\begin{equation}
\min_{\Theta} \frac{1}{P}\sum_{j=1}^P\|x_M^j-y^j\|_2^2+\frac{\lambda \Delta T}{2N} \sum_{i=1}^2\sum_{j=1}^N\|\Theta_N\|_2^2 
\end{equation}
subject to the geometric Euler discretization \cite{hairer2006geometric} of the ODE \eqref{eq:sphnode}, a  M-layer manifold invariant Resnet,
\[x^j_{N+1} =e^{\Delta t f_1 (t,x_N^j,\Theta_N^1)B_1 + f_2 (t,x_N^j,\Theta_N^2)B_2}x_N^j \]
for $N= 0,..., M $.
In this example, data is generated using the following ODE,
\begin{eqnarray}
& \dot{x} (t) = x_2(t)g_1 (x(t)) +x_3(t) g_3 (x(t)); \\ \nonumber
& x(0) =x^j_0 \nonumber
\end{eqnarray}
\\
The geometric discretization using matrix exponentials ensures that the solution of the discrete system remains on the manifold for every time step $N$ \cite{hairer2006geometric}. 
\begin{figure}
\centering
\begin{minipage}[t]{0.23\textwidth}
\begin{tikzpicture}
		\node at (0,0){\includegraphics[width=\linewidth]{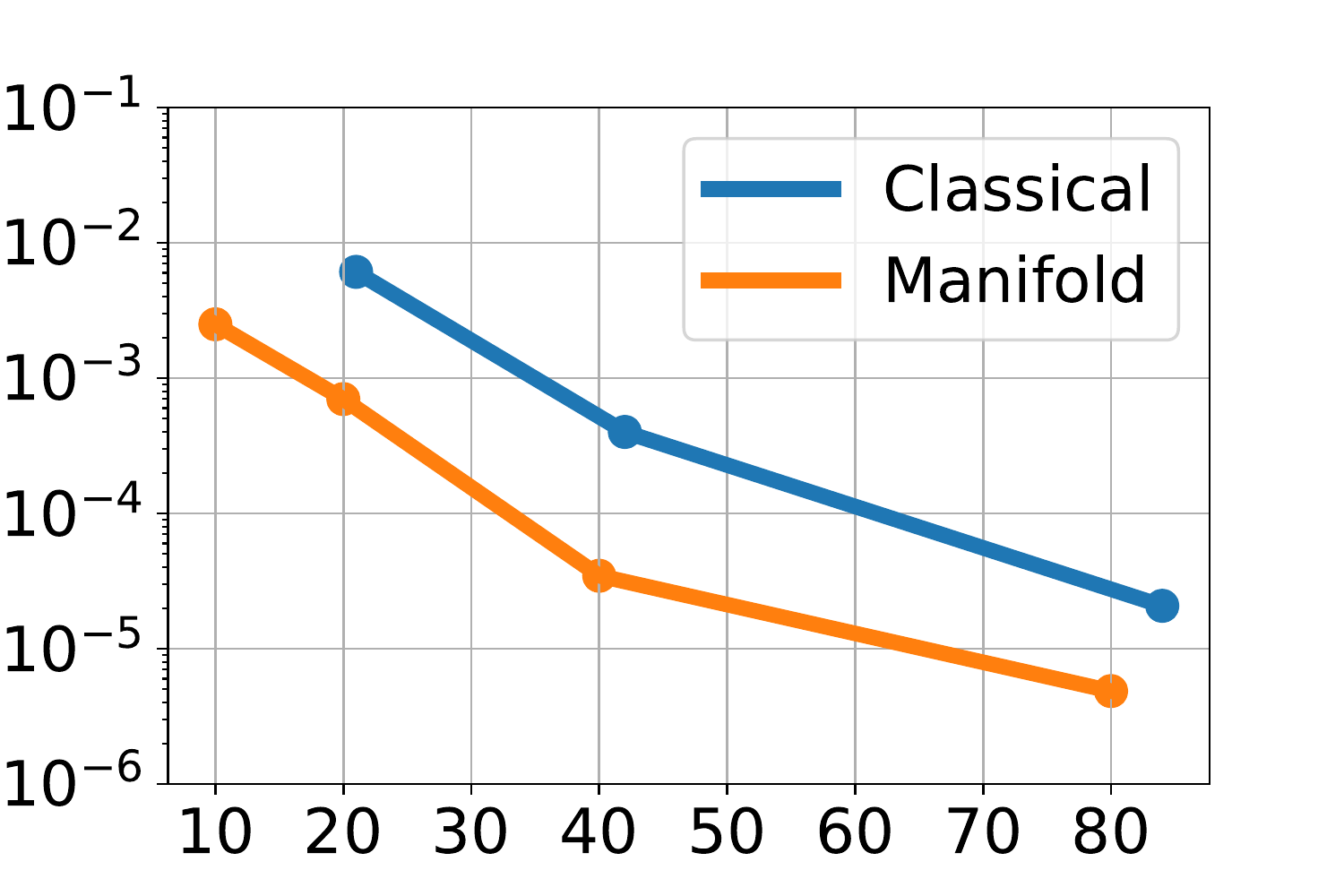}};
		\node at (-1.9 ,0) [scale=0.6,rotate=90] {Test loss};
		\node at (0,-1.2) [scale=0.6] {\# of parameters};
  \node at (0,-1.5) [scale=0.6] {(a) exp1: test loss};
	\end{tikzpicture}
\end{minipage}	
~
\begin{minipage}[t]{0.23\textwidth}
\begin{tikzpicture}
		\node at (0,0){\includegraphics[width=\linewidth]{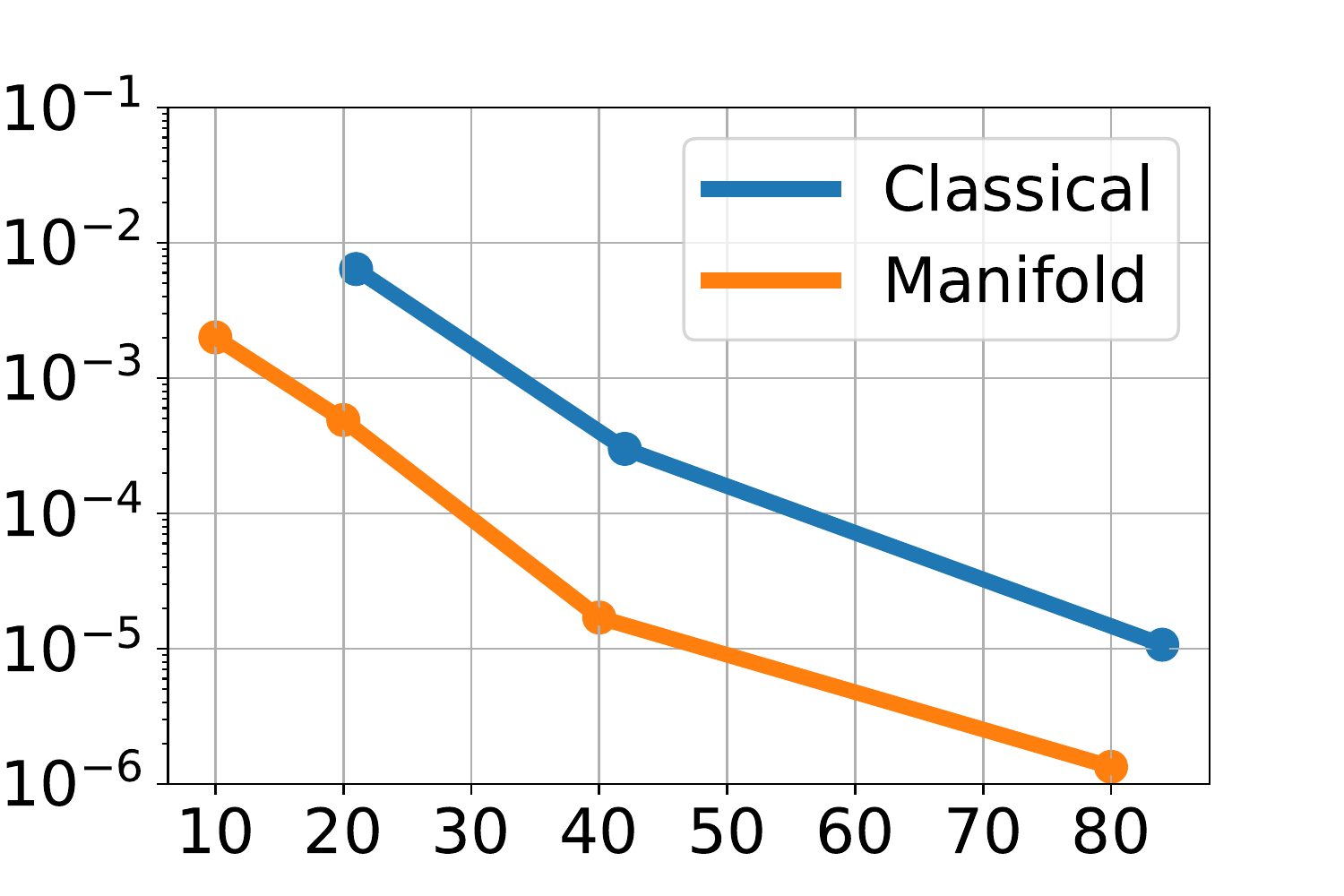}};
		\node at (-1.9 ,0) [scale=0.6,rotate=90] {Training loss};
		\node at (0,-1.2) [scale=0.6] {\# of parameters};
  \node at (0,-1.5) [scale=0.6] {(b) exp1: training loss};
\end{tikzpicture}
\end{minipage}	
~
\begin{minipage}[t]{0.23\textwidth}
\begin{tikzpicture}
		\node at (0,0){\includegraphics[width=\linewidth]{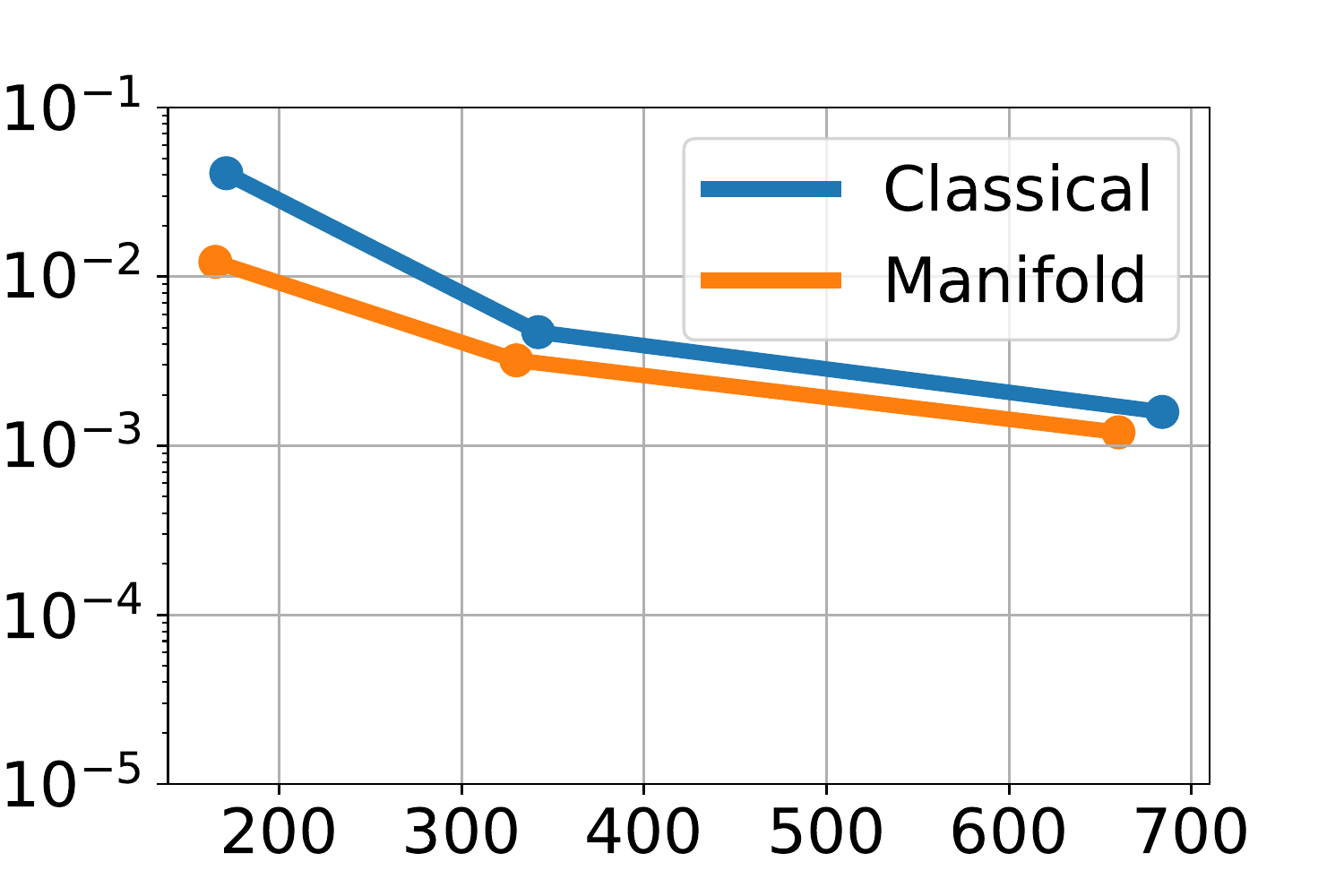}};
		\node at (-1.9 ,0) [scale=0.6,rotate=90] {Test loss};
		\node at (0,-1.2) [scale=0.6] {\# of parameters};
  \node at (0,-1.5) [scale=0.6] {(c) exp2: test loss};
	\end{tikzpicture}
\end{minipage}	
~
\begin{minipage}[t]{0.23\textwidth}
\begin{tikzpicture}
		\node at (0,0){\includegraphics[width=\linewidth]{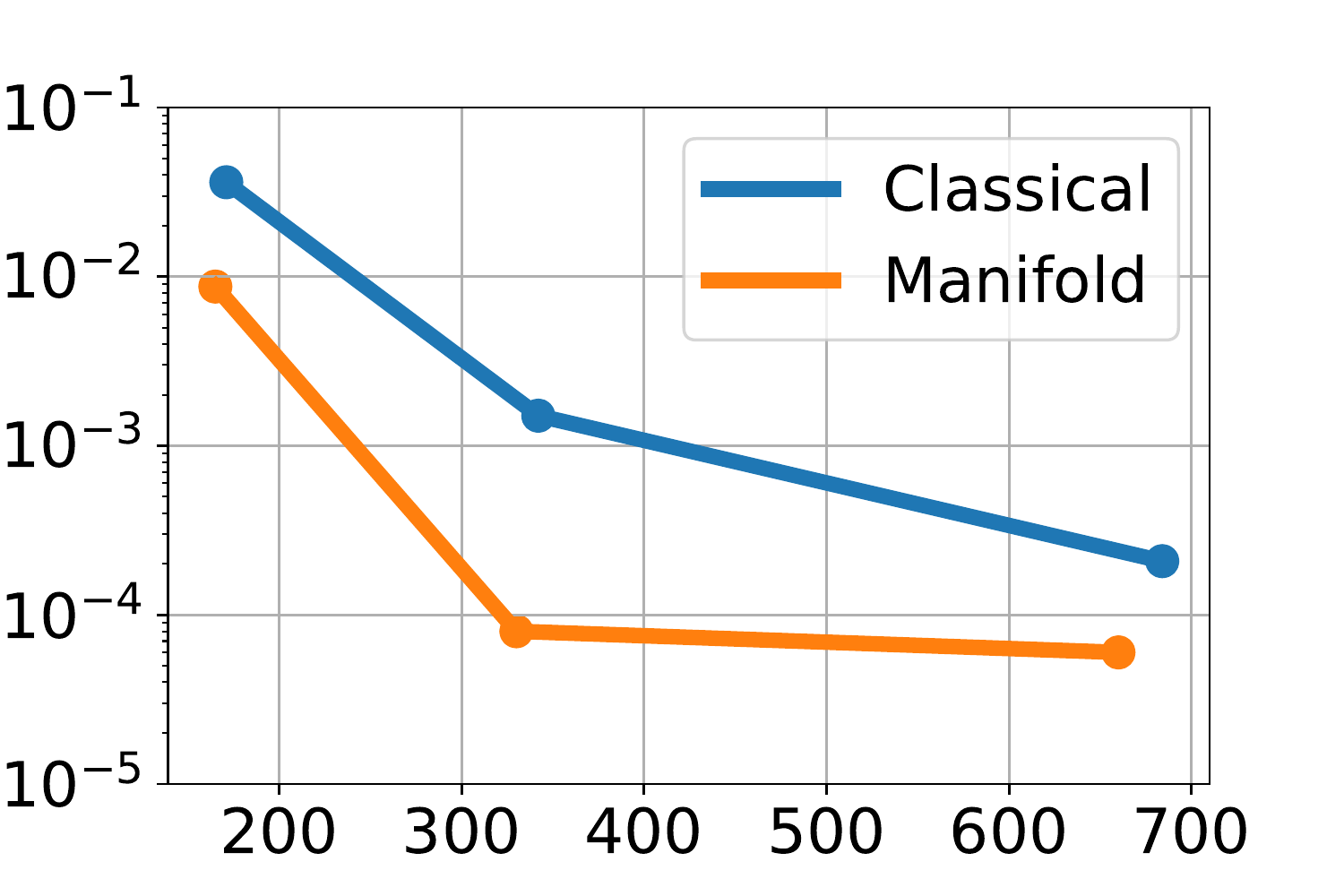}};
		\node at (-1.9 ,0) [scale=0.6,rotate=90] {Training loss};
		\node at (0,-1.2) [scale=0.6] {\# of parameters};
  \node at (0,-1.5) [scale=0.6] {(d) exp2: training loss};
\end{tikzpicture}
\end{minipage}
 \caption{(a) and (b) show the results of example 1. (c) and (d) show the results of example 2.}
	\label{fig:exp1}
\end{figure} 

\textbf{Example 2: Supervised Learning on the 3D rotation group} In this example with consider the 3 dimensional orthogonal groups $\mathcal{M}  = SO(3)  = \{ R \in \R^{3 \times 3} = R^TR = I ,~ \det(R) = 1 \} $ 

\begin{eqnarray}
\label{eq:genma}
B_1 = 
\begin{bmatrix}
0 & -1 & 0 \\
1 &  0  & 0 \\
0 & 0  & 0 
\end{bmatrix},
~
B_2 =
\begin{bmatrix}
0 & 0& 1 \\
0 &  0  & 0 \\
-1 & 0  & 0 
\end{bmatrix},
~
B_3 =
\begin{bmatrix}
0 & 0& 0 \\
0 &  0  & -1 \\
0 & 1  & 0 
\end{bmatrix},
\nonumber \\ 
\end{eqnarray}
In this case we take $g_i(X)  = B_iX$ for all $X \in \mathbb{R}^{3 \times 3}$ for $i=1,2,3$.
We define the neural ODE in the form of a matrix differential equation
\begin{eqnarray}
\dot{X}(t) =  \sum_{i=1}^3 a_i(t)B_i X(t) \sigma \bigg(\langle w_i(t),X(t)\rangle +b_i(t) \bigg ) \\
X(0) = X_0 \in 
\mathbb{R}^{3 \times 3}
\label{eq:sphnode}
\end{eqnarray}
By construction $g_i(X)=B_iX \in T_X{SO(3)}$ for all $X \in M$.
Define 
\[f_i(t,X,\Theta^i(t)) = a_i(t)\sigma(\langle w_i(t),X \rangle + b_i(t))\]
for $X \in \mathbb{R}^{3 \times 3}$ and $\Theta^i(t) = \{a_i(t),w_i(t),b_i(t)\}$ . The discretized problem that we solve is the following,

\begin{equation}
\min_{\Theta} \frac{1}{P}\sum_{j=1}^P\|X_M^j-Y^j\|_2^2+\frac{\lambda \Delta t}{2} \sum_{i=1}^2\sum_{j=1}^M\|\Theta_N\|_2^2 
\end{equation}
subject to M layer manifold Resnet,
\\
\[X^j_{N+1} =e^{\Delta t f_1 (t,X_N^j,\Theta_N^1)B_1+\Delta t f_2 (t,X_N^j,\Theta_N^2)B_2+\Delta t f_3 (t,X_N^j,\Theta_N^3)B_3}X_N^j\]
for $N= 0,..., M $.

In this example, data is generated using the following ODE,
\begin{eqnarray}
& \dot{X} (t) = {\rm Tr}  ( X^2+1)\Big(g_1(X) +g_2(X) +g_3(X (t)) \Big ); \\
& X(0) =X^j_0
\end{eqnarray}
where $\rm{Tr}$ denotes the Trace of the matrix.
\\

\textbf{Implementation}\\
For the experiments, we compare the manifold ODE with classical neural ODE. We use the sigmoid function as the activation function $\sigma$ in all experiments. All the models were trained with standard stochastic gradient descent (SGD) using learning rate 10, and momentum 0.9. The learning rate decays at epochs 500,1000,2000,4000,6000 with a factor 0.8. We started training with a large learning rate to avoid getting stuck into local minima. The classical ODE in all cases needed much more parameters per layer. For Example 1, there are 10 parameters per layer for manifold invariant ODE while there are 21 for classical one. For Example 2, the value is 33 and 171. For a fair comparison, in example 1, we use $M = [1,2,4]$ layers for classical ODE and $M =[1,2,4,8]$ layers for manifold one. In example 2, we use $M =[1,2,4,8]$ for classical ODE and $M =[5,10,20]$ for manifold ODE. 

\textbf{Discussion}\\
The training loss and test loss for the case over the sphere is shown in Figure \eqref{fig:exp1} a-b, respectively. The training loss and test loss for the case over the $SO(3)$ is shown in Figure \eqref{fig:exp1} c-d, respectively. 
Compared to the classical ODE (blue curve), the manifold ODE (orange curve) could always achieve low loss with smaller number of parameters. The result verifies that manifold ODE could truly reduce the number of parameters required to fit the model to data, and mitigate the curse of dimensionality. The greater difference can be observed in the test losses, indicated the superior capability of manifold-invariant neural ODE in terms of generalization.

\section{Conclusion}
We presented a class of neural ODEs for which a given manifold is invariant, and studied their universal approximation properties from the perspective of controllability analysis. Additionally, we use numerical simulations to verify that in terms, of sample complexity and accuracy the neural ODEs on manifolds show superior performance. Possible future directions of work include qualitatively establishing the formal sample complexity estimates of these neural ODEs, and extending them to situations when the manifold $\M$ is unknown.
\appendix

\section{Supplementary results}
For the results stated in this section we will need $L^1(0,1;\Rd)$, the set of vector-valued measurable functions that are integrable over $(0,1)$. We will denote $L^1(0,1;\Rd)$ by  $L^1(0,1)$ when $d=1$. We will say that a sequence $(f^n)_{n=1}^{\infty} $  in  $L^1(0,1;\Rd)$ weakly converges to $f \in L^1(0,1;\Rd)$ if 
 \[ \lim_{n \rightarrow \infty} \int_0^1 \langle \phi(x) , f^n(x) \rangle dx  = \int_{0}^1 \langle  \phi(x) , f(x) \rangle dx  \]
 for all $\phi \in L^\infty(0,1;\R^d)$, where $\langle  \cdot, \cdot \rangle $ denotes the usual inner product on $\Rd$. Given this definition, we have the following result. 
 \begin{lemma}
\label{res:diffapp}
Given Assumption \ref{asmp:neura}, suppose  
that $(u^n_i)_{n=1}^{\infty}$ is a sequence of functions  in $\U$ such that $(u_i^n(x,\cdot))_{n=1}^{\infty}$ is weakly converging to $u_i(x,\cdot)$ in $L^1(0,1)$ for each $x \in \M$ and each $i \in \{1,...,m\}$.  Suppose additionally that the sequence of functions $(u^n_i)_{n=1}^{\infty}$ is bounded and Lipschitz with a common upper bound and common Lipschitz constant on $\M$.
Let $
X^n: \mathcal{M} \rightarrow \mathcal{M}$ be the map given by
   $X^n(x_0) = x^n(1)$ for all $x_0 \in \mathcal{M}$, where $x^n(t)$ is the solution of the differential 
	\begin{eqnarray}
	\dot{x}^n(t) =  \sum_{i=1}^m u^n_i(t,x^n(t)) g_i \bigg(x^n(t ) \bigg) \\
	x^n(0) = x^n_0
	\end{eqnarray}
 Then $(X^n)_{n=1}^{\infty}$ is uniformly converging to $X$ on $\M$.
\end{lemma}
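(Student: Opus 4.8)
The plan is to pass to the limit in the Carath\'eodory (integral) form of the dynamics, extracting uniformly convergent trajectories by Arzel\`a--Ascoli and identifying the limit by means of the weak convergence hypothesis. Write $f^n(t,x) = \sum_{i=1}^m u_i^n(t,x)g_i(x)$ and $f(t,x)=\sum_{i=1}^m u_i(t,x)g_i(x)$, and denote by $x^n(\cdot\,;x_0)$ the unique $\M$-valued solution of the Cauchy problem with right-hand side $f^n$ started at $x_0$, whose existence and global-in-time $\M$-invariance are guaranteed by Assumption~\ref{asmp:vec} together with the invariance theorem preceding Theorem~\ref{thm:mainthm}; thus $X^n(x_0)=x^n(1;x_0)$.

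First I would collect uniform estimates. Since $\M$ is compact and the $g_i$ are smooth, the $g_i$ and their first derivatives are bounded on $\M$; combined with the common bound $\|u_i^n\|_\infty\le\bar u$ and the common Lipschitz constant $L$ of $u_i^n(t,\cdot)$ on $\M$, the fields $f^n$ are bounded on $[0,1]\times\M$ by a constant $C_0$ and Lipschitz in $x$ on $\M$ by a constant $C_1$, both independent of $n$. By Gronwall's inequality, $x_0\mapsto x^n(t;x_0)$ is $e^{C_1 t}$-Lipschitz uniformly in $n$ and $t$, while $|\dot x^n|\le C_0$ makes each curve $t\mapsto x^n(t;x_0)$ $C_0$-Lipschitz. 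Hence the family of maps $(t,x_0)\mapsto x^n(t;x_0)$ on the compact set $[0,1]\times\M$ is uniformly equicontinuous with values in $\M$, and Arzel\`a--Ascoli yields a subsequence converging uniformly to a continuous limit $(t,x_0)\mapsto x^\star(t;x_0)$. I would also observe that the weak limits $u_i$ inherit the structural bounds: testing $|u_i^n(s,x)-u_i^n(s,y)|\le L|x-y|$ and $|u_i^n(s,x)|\le\bar u$ against $\mathbbm{1}_{[0,t]}$ and passing to the limit gives the same inequalities for $\int_0^t u_i(\cdot\,,x)$, so along a single exceptional null set (obtained via a countable dense subset of $\M$) $u_i(s,\cdot)$ is $L$-Lipschitz and bounded by $\bar u$ for a.e. $s$; consequently $\dot x=f(t,x)$, $x(0)=x_0$, has a unique solution $x(\cdot\,;x_0)$ on $[0,1]$, whose time-one map I call $X$.

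The core step is to show, for fixed $x_0$ and with $\gamma^n:=x^n(\cdot\,;x_0)$, $\gamma:=x^\star(\cdot\,;x_0)$, that $\int_0^t f^n(s,\gamma^n(s))\,ds\to\int_0^t f(s,\gamma(s))\,ds$ for every $t$. Splitting $f^n(s,\gamma^n(s)) = f^n(s,\gamma(s))+\bigl(f^n(s,\gamma^n(s))-f^n(s,\gamma(s))\bigr)$, the second bracket is bounded by $C_1|\gamma^n(s)-\gamma(s)|\to0$ uniformly in $s$, so its integral vanishes. For the first, fix $\delta>0$; by uniform continuity of $\gamma$ pick a partition $0=t_0<\dots<t_K=t$ and points $q_k\in\M$ with $|\gamma(s)-q_k|\le\delta$ on $[t_{k-1},t_k]$. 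Using the uniform Lipschitz/boundedness of the $u_i^n$ and the uniform continuity of the $g_i$ on $\M$, $\bigl|\int_{t_{k-1}}^{t_k}f^n(s,\gamma(s))\,ds-\sum_{i=1}^m g_i(q_k)\int_{t_{k-1}}^{t_k}u_i^n(s,q_k)\,ds\bigr|\le C_2\,\delta\,(t_k-t_{k-1})$, so the total replacement error over the partition is at most $C_2\delta t$, uniformly in $n$. Each scalar integral $\int_{t_{k-1}}^{t_k}u_i^n(s,q_k)\,ds\to\int_{t_{k-1}}^{t_k}u_i(s,q_k)\,ds$ by weak convergence (test function $\mathbbm{1}_{[t_{k-1},t_k]}\in L^\infty(0,1)$), and there are only finitely many pairs $(i,k)$, so the finite sum converges; the same replacement estimate applied to $u_i$ (legitimate because $u_i(s,\cdot)$ is a.e. $L$-Lipschitz and $\bar u$-bounded) brings $\sum_{i,k}g_i(q_k)\int_{t_{k-1}}^{t_k}u_i(s,q_k)\,ds$ within $C_2\delta t$ of $\int_0^t f(s,\gamma(s))\,ds$. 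Letting $n\to\infty$ and then $\delta\to0$ proves the claim. Passing to the limit in $\gamma^n(t)=x_0+\int_0^t f^n(s,\gamma^n(s))\,ds$ then gives $\gamma(t)=x_0+\int_0^t f(s,\gamma(s))\,ds$, so $\gamma=x(\cdot\,;x_0)$ by uniqueness. Hence $x^\star\equiv x$ independently of the chosen subsequence, the whole family $(t,x_0)\mapsto x^n(t;x_0)$ converges uniformly on $[0,1]\times\M$, and evaluating at $t=1$ gives $X^n\to X$ uniformly on $\M$.

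The main obstacle is precisely this core step: weak convergence of $u_i^n(\cdot\,,x)$ in the time variable survives neither composition with the moving argument $\gamma^n(s)$ nor multiplication by the varying coefficient $g_i(\gamma^n(s))$. The device that circumvents this is to freeze the spatial argument on a fine time-partition, producing an error controlled by the mesh parameter $\delta$ uniformly in $n$ thanks to the uniform Lipschitz and boundedness hypotheses on the $u_i^n$; after freezing, weak convergence is applied only to the finitely many ordinary scalar integrals $\int_{t_{k-1}}^{t_k}u_i^n(s,q_k)\,ds$, and a diagonal argument in $\delta$ and $n$ closes the estimate. The only other point requiring care is transferring the a.e.-in-time Lipschitz and boundedness bounds to the weak limit $u_i$, which is what guarantees well-posedness of the limiting ODE and hence that the subsequential limit is unique.
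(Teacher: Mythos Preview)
Your argument is correct and shares the same skeleton as the paper's: uniform bounds and Lipschitz constants for the right-hand sides, Gronwall to get equicontinuity of the flow maps, Arzel\`a--Ascoli to extract a convergent subsequence, and uniqueness of the limiting ODE to force convergence of the full sequence. The genuine difference is in how the limit is identified. The paper does not carry out the passage to the limit in the integral equation itself; it invokes \cite[Lemma~2.7]{pogodaev2016optimal} as a black box to obtain pointwise convergence $X^n(x_0)\to X(x_0)$, and only then upgrades to uniform convergence via equicontinuity and Arzel\`a--Ascoli. You instead give a self-contained identification: extract the uniform limit $\gamma$ first, then show $\gamma$ solves the limit ODE by freezing the spatial argument on a fine time-partition so that the weak-convergence hypothesis is applied only to the finitely many scalar integrals $\int_{t_{k-1}}^{t_k}u_i^n(s,q_k)\,ds$. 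Your route is longer but fully elementary and makes explicit the obstruction---weak convergence in $t$ does not survive composition with the moving state---that the cited lemma absorbs. You also take the extra care of transferring the a.e.\ boundedness and Lipschitz bounds to the weak limit $u_i$ so that the limit ODE is well-posed, a point the paper's argument leaves implicit; that step (via Lebesgue differentiation and a countable dense set in $\M$) is correct but terse, and would benefit from one more sentence if you write it up.
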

\begin{proof}

Given the density Assumption \ref{asmp:neura}, for every $u^n_i \in \U$, there exists a sequence $(u_i)_{n=1}^{\infty} \in \N$ such that $(u_i(t,\cdot))_{n=1}^{\infty}$ is converging  to $u_i(t,\cdot)$ uniformly on $\M$ for each $i = 1,...,m$, by approximating $(u^{n}_i (t,\cdot))$ using elements in $\N$, that is, using a wide neural network. Moreover, due to Assumption \ref{asmp:neura}, we can assume that the approximations $(u_i)_{n=1}^{\infty}$ have are uniformly bounded almost everywhere on compact subsets of $\Rd$. This implies that the Lipschitz constants of the functions $(u_i)_{n=1}^{\infty} $ are uniformly bounded and have uniformly bounded Lipschitz constants.
Next, from \cite[Lemma IV.7]{elamvazhuthi2022neural}, for each $n \in \mathbb{Z}_+$, $(u^n_i)$ there exists a sequence $(u^{n,m}_i)_{i=1}^{\infty}$ in $\A$ such that  $(u^{n,m}_i(t,\cdot))_{m=1}^{\infty}$ is weakly converging to $u_i^n$ in $L^1(0,1)$, for each $x \in M$, and all $i =1,...,M$. Moreover, from the construction in the statement of \cite[Lemma IV.7]{elamvazhuthi2022neural}, one can see that the sequences $(u^{n,m}_i)_{m=1}^{\infty}$ are uniformly bounded and have a common Lipschitz constant. The result then follows from Theorem \ref{res:diffapp}.
\end{proof}

\begin{proposition}
Given Assumption \ref{asmp:neura}, suppose $u_i \in \U$. Then, for every $\epsilon>0$, there exist  control laws $u_i^{\epsilon}  \in \A$ such that the time one map corresponding to the differential equation 
Let $X^{\epsilon}:\mathbb{R}^n \rightarrow \mathbb{R}^n$ be the map given by $X^{\epsilon}(x_0) =x^{\epsilon}(T)$  where $x^{\epsilon}(t)$ is the solution of the differential equation,
	\begin{eqnarray}
	\dot{x} =  \sum_{i=1}^m u^\epsilon_i(t,x^{\epsilon}(t)) g_i \bigg(x^{\epsilon}(t ) \bigg) \\
	x(0) = x_0
 	\end{eqnarray}
for each initial condition $x_0\in \mathbb{R}^n$ and solution $x(t)$ of the differential equation \eqref{eq:maincsys} satisfies 
\begin{equation}
\sup_{x \in K} |X(x) -X^{\epsilon}(x)| \leq \epsilon 
\end{equation}
\end{proposition}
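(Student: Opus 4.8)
The plan is to reach an element of $\A$ from a general control $u_i\in\U$ in two steps, each governed by a continuous--dependence estimate for the flow of \eqref{eq:maincsys}: first pass to a ``wide network per time--slice'' control $\tilde u_i\in\N$ using the density Assumption \ref{asmp:neura2}, and then collapse each wide slice to a ``single neuron per time--slice'' control $u_i^\epsilon\in\A$ by a chattering/relaxation argument.

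\textbf{Step 1 ($\U\to\N$).} Since $K$ is compact, each $u_i$ is piecewise constant in $t$, and each slice $u_i(t,\cdot)$ is locally Lipschitz, the solutions of \eqref{eq:maincsys} starting in $K$ remain, for $t\in[0,1]$, in a fixed compact set (on $\M$ this is the invariance result established above; in general it follows from an a priori bound on $[0,1]$). Fix $R$ with this set contained in $B_R(0)$. Mollifying each $u_i$ in $x$ perturbs the field $u_ig_i$ by an arbitrarily small amount uniformly on $B_R(0)$ and preserves piecewise constancy in $t$, so I may assume $u_i(t,\cdot)\in C^\infty$. On each of the finitely many intervals where $u_i$ is $t$--constant, Assumption \ref{asmp:neura2} yields $h\in\F$ with $|u_i(t,\cdot)-h|+\sum_k|\partial_{x_k}u_i(t,\cdot)-\partial_{x_k}h|<\delta$ a.e.\ on $B_R(0)$; assembling these over intervals and indices $i$ produces $\tilde u_i\in\N$, uniformly bounded and Lipschitz on $B_R(0)$ with constants depending only on the $u_i$, and with $\sup_{B_R(0)}|\tilde u_i(t,\cdot)-u_i(t,\cdot)|\le\delta$. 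The fields $\sum_i\tilde u_ig_i$ and $\sum_iu_ig_i$ are then uniformly $O(\delta)$--close on $B_R(0)$ and equi--Lipschitz there, so Grönwall's inequality gives $\sup_{x\in K}|X(x)-\widetilde X(x)|\le\epsilon/2$ once $\delta$ is small enough, where $\widetilde X$ is the time--one map of the $\tilde u_i$--system.

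\textbf{Step 2 ($\N\to\A$).} On each $t$--constant interval write $\tilde u_i(t,\cdot)=\sum_{j=1}^{N}\alpha_{ij}\sigma(w_{ij}^Tx+b_{ij})$. Refine the time partition and, on each small subinterval, cycle rapidly in a fixed order through the $mN$ single--neuron controls obtained by activating one neuron at a time, setting $u_i^{\epsilon,k}(t,\cdot)=mN\,\alpha_{ij}\sigma(w_{ij}^Tx+b_{ij})$ on a $\tfrac{1}{mN}$--fraction of the subinterval and $u_{i'}^{\epsilon,k}(t,\cdot)=0$ for $i'\ne i$. Each $u_i^{\epsilon,k}$ lies in $\A$, and as the switching frequency $k\to\infty$ the controls $u_i^{\epsilon,k}(x,\cdot)$ converge weakly in $L^1(0,1)$ to $\tilde u_i(x,\cdot)$ for every $x$, with a common bound and a common Lipschitz constant on $B_R(0)$; this is precisely the construction in \cite[Lemma IV.7]{elamvazhuthi2022neural}. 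Lemma \ref{res:diffapp} then gives uniform convergence of the associated time--one maps to $\widetilde X$, so $\sup_{x\in K}|\widetilde X(x)-X^\epsilon(x)|\le\epsilon/2$ for $k$ large, with $u_i^\epsilon:=u_i^{\epsilon,k}$; the triangle inequality completes the proof.

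The crux is Step 2: one must verify that the switched controls, whose amplitude is inflated by $mN$, still admit a \emph{common} sup bound and Lipschitz constant on $B_R(0)$ and converge weakly in $L^1$ --- equivalently, that $\int_0^t u_i^{\epsilon,k}(s,x)\,ds$ remains bounded and converges uniformly in $t$ --- which is exactly the hypothesis needed to invoke Lemma \ref{res:diffapp} (and the reason Step 1 was carried out in $C^1$ rather than merely $C^0$: the derivative bound is what supplies the uniform Lipschitz constants keeping the Grönwall/compactness argument behind Lemma \ref{res:diffapp} valid).
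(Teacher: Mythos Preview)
Your two--step decomposition $\U\to\N\to\A$ is exactly the paper's route: Step~1 invokes the density Assumption~\ref{asmp:neura} to replace each time--slice by a wide network, and Step~2 cites \cite[Lemma~IV.7]{elamvazhuthi2022neural} for the chattering construction that collapses $\N$ to $\A$ with weak $L^1$ convergence, after which Lemma~\ref{res:diffapp} converts weak convergence of controls into uniform convergence of flows. Your write--up is in fact more explicit than the paper's (you spell out the mollification, the Gr\"onwall estimate for Step~1, and the time--splitting scheme for Step~2, and you correctly isolate the verification that the inflated--amplitude switched controls retain a $k$--independent bound and Lipschitz constant), but the skeleton and the key lemmas invoked are identical.
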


\begin{proof} 
\textbf{of Theorem \ref{thm:mainthm}} \\
Consider the sequence of vector fields $(v^n)_{n = 1}^ \infty$ defined by

\[v(t,x) =  \sum_{i=1}^m u^n_i(t,x^n) g_i \big(x^n \big)\]
for all $x \in \Rd$. Since $(u^n_i)_{n=1}^{\infty}$ is bounded and Lipschitz with a common upper bound and common Lipschitz constant on $M$, it follows that   $(v^n)_{n = 1}^ \infty$ are bounded and Lipschitz with a common upper bound and common Lipschitz constant on $\M$, the same properties hold true for   $(v^n)_{n = 1}^ \infty$ on $\M$.
It follows from \cite[Lemma 2.7]{pogodaev2016optimal} that $(X^n(x_0))_{n=1}^{\infty}$ converges to $X(x_0)$ for each $x_0 \in \M$. Note that the Lemma is stated for $x_0 \in \R^d$, when the boundedness and Lipschitz constant assumption hold on $\Rd$, but going through the proof, one can infer that the result is also true when $x(t) \in \M$ for all $ t\in [0,1]$ and $(u^n_i)_{n=1}^{\infty}$ is bounded and Lipschitz with a common upper bound and common Lipschitz constant on $M$. 

Next, as the following standard arguments shows, it follows that the maps $(X^n)_{n=1}^{\infty}$ are uniformly Lipschitz. Let $x_0, y_0 \in \M$. Consider the inequality,
\begin{align*}
|x(t)-y(t)| \leq |x_0 - y_0| + \int_0^t |v^n(t,x(t))- v^n(t,y(t))|dt \\
\leq |x_0 - y_0| + L \int_0^t |x(t)- y(t)|dt \\
\end{align*}
From Gronwall's inequality, it follows that
\begin{align*}
|x(t)-y(t)| \leq  e^{Lt} |x_0 - y_0| \\
\end{align*}
for all $t \in [0,1]$. Hence, $|X^n(x_0) -X^n(y_0) |\leq |x_0 - y_0| e^{L}$ for all $x_0,y_0 \in \M$. and hence the sequence $(X^n)_{n=1}^{\infty}$ are uniformly Lipschitz continuous. This it implies that the sequence is equicontinuous. From the Arzelà–Ascoli theorem, we can conclude that there exists a subsequence of $(X^n)_{n=1}^{\infty}$ that is uniformly converging to a map $\tilde{X}$ on $\M$. But we know that $(X(x_0))_{n=1}^{\infty}$ is converging to $(X(x_0))$ for all $x_0 \in \M$. Therefore, the convergence must be uniform.
\end{proof}

\bibliography{l4dc}

\end{document}